\definecolor{webgreen}{rgb}{0,.5,0}
\definecolor{webbrown}{rgb}{.6,0,0}
\newcommand{\seqnum}[1]{\href{http://oeis.org/#1}{\underline{#1}}}
\begin{document}

\begin{center}
\vskip 1cm{\LARGE\bf Fixed Sequences for a Generalization of the \\
\vskip .07in
Binomial Interpolated Operator and for some \\
\vskip .13in
Other Operators}
\vskip 1cm
\large
Marco Abrate, Stefano Barbero, Umberto Cerruti, and Nadir Murru\\
Department of Mathematics \\
University of Turin \\
via Carlo Alberto 8/10 \\
Turin \\
Italy \\
\href{mailto:marco.abrate@unito.it}{\tt marco.abrate@unito.it}\\
\href{mailto:stefano.barbero@unito.it}{\tt stefano.barbero@unito.it}\\
\href{mailto:umberto.cerruti@unito.it}{\tt umberto.cerruti@unito.it}\\
\href{mailto:nadir.murru@unito.it}{\tt nadir.murru@unito.it}\\
\end{center}

\theoremstyle{plain}
\newtheorem{theorem}{Theorem}
\newtheorem{corollary}[theorem]{Corollary}
\newtheorem{lemma}[theorem]{Lemma}
\newtheorem{proposition}[theorem]{Proposition}

\theoremstyle{definition}
\newtheorem{definition}[theorem]{Definition}
\newtheorem{example}[theorem]{Example}
\newtheorem{conjecture}[theorem]{Conjecture}

\theoremstyle{remark}
\newtheorem{remark}[theorem]{Remark}

\begin{abstract}  
This paper is devoted to the study of eigen-sequences for some
important operators acting on sequences. Using functional equations
involving generating functions, we completely solve the problem of
characterizing the fixed sequences for the Generalized Binomial operator.
We give some applications  to integer sequences. In particular we show
how we can generate fixed sequences for Generalized Binomial and their
relation with the Worpitzky transform. We illustrate this fact with
some interesting examples and identities, related to Fibonacci,
Catalan, Motzkin and Euler numbers. Finally we find the
eigen-sequences for the mutual compositions of the operators
Interpolated Invert, Generalized Binomial and Revert.
\end{abstract}

\section{Introduction}
In a previous paper, Barbero, Cerruti, and Murru \cite{bcm2} have studied the action of a generalization of the Binomial interpolated operator on linear recurrent sequences. Particular attention was given to the study of the fixed sequences for this operator, finding some interesting results about known sequences, like the Catalan numbers. As final remarks, we posed the question of studying the general problem of characterizing the sequences fixed by this operator. 
The research interest involving operators which act on sequences has an important aspect related to those sequences which are fixed, also called \emph{eigen-sequences}. Bernstein and Sloane \cite{BerSlo} have deeply studied fixed sequences, for the composition of several operators as Binomial, Invert and others. The Binomial inverse transformation has been especially studied in this sense. For example, Sun \cite{Sun} and Wang \cite{Wang} have found interesting results about the fixed sequences for the Binomial inverse operator. Furthermore, Mattarei and Tauraso \cite{Mat} have studied several congruences using these invariant sequences. Since the Binomial and the Binomial inverse transform arise as a particular case of Generalized Binomial operator, the relevance of finding the corresponding eigen-sequences is clear. Here we give a complete answer for this question characterizing the generating functions of such fixed sequences.
\begin{definition}
We define, over an integral domain $R$, the set $$\mathcal{S}(R)=\left\{a=(a_n)_{n=0}^{+\infty}: \ \forall n  \ a_n \in R ,\, a_0 =1\right\}\ .$$
\end{definition}
\begin{definition} \label{Idef}
For any element $x \in R$, the \emph{Interpolated Invert} operator \ $I^{(x)}$ acts on a sequence $a \in \mathcal{S}(R)$ 
giving the sequence $I^{(x)}(a) = b =\left(b_n(x)\right)_{n=0}^{ +\infty }$, where the sequence $b$ has generating function
\begin{equation} \label{invertgen}
\mathbf{B}(t) = \sum\limits_{n = 0}^{ + \infty } {b_n(x) t^n  = \frac{{\sum\limits_{n = 0}^{ + \infty } {a_n t^n } }}{{1 - xt\sum\limits_{n = 0}^{ + \infty } {a_n t^n } }}}\quad.  
\end{equation} 
\end{definition}
\begin{definition}\label{Ldef}
For any element $y \in R$, the \emph{Interpolated Binomial} operator $L^{(y)}$ acts on a sequence $a \in \mathcal{S}(R)$
giving the sequence $L^{(y)}(a)=l=\left(l_n(y)\right)_{n=0}^{ +\infty }$, where the $n$--th element of the sequence $l$ is
\begin{equation} \label{bininterp}
l_n=\sum^{n}_{j=0}{\binom{n}{j} }y^{n - j} a_j.
\end{equation}
The exponential generating function for the sequence $l$ is
$$
\mathcal{L}(t) = \sum\limits_{n = 0}^{ + \infty }l_n\frac{t^n }{n!} = \sum\limits_{n = 0}^{ + \infty } \sum\limits_{j = 0}^n \frac{(yt)^{n - j} }{(n - j)!}\frac{a_j t^j }{j!} =e^{yt} \mathcal{A}(t) 
$$
being
$$ e^{yt}=\sum\limits_{n=0}^{+ \infty}\frac{(yt)^n}{n!} \quad \quad \mathcal{A}(t)=\sum\limits_{n=0}^{+\infty}
\frac{a_n t^n}{n!} , $$
so that (recalling that $a_0=1$) we have  the ordinary generating function
$$
\mathbf{L}(t) =\frac{1}{t}A\left( {\frac{t}{{1 - yt}}} \right)
$$
with $A(t)=\sum\limits_{n=0}^{+\infty}a_nt^{n+1}\quad.$
\end{definition}
\begin{definition}\label{etadef}
The \emph{Revert} operator $\eta$ acts as follows: if we consider the sequences $a=(a_n)_{n=0}^{+\infty}$ $\in \mathcal{S}(R)$ ,  $b =(b_n)_{n=0}^{+\infty}\in\mathcal{S}(R)$ we have  $\eta(a)=b$ if and only if the sequences $a$ and $b$ satisfy the relations 
\begin{equation} \label{e0}
\begin{cases}
u=u(t)=\sum\limits_{n=0}^{+\infty}a_nt^{n+1}\\
t=t(u)=\sum\limits_{n=0}^{+\infty}b_nu^{n+1} \ &\text{inverse series of $u$ .}\\
\end{cases}
\end{equation}
\end{definition}
Prodinger \cite{Prod} defined a generalization of the Interpolated Binomial operator as
\begin{equation} \label{new-L} L^{(h,y)}(a)=b,\quad b_n=\sum_{i=0}^n\binom{n}{i}h^iy^{n-i}a_i\  \end{equation}
for any sequence $a\in\mathcal{S}(R)$ and nonzero elements $h,y\in R$. The definition can be extended when $h=0$ or $y=0$ as follows:
$$L^{(0,y)}(a)=(y^na_0)_{n=0}^{+\infty},\quad L^{(h,0)}(a)=(h^na_n)_{n=0}^{+\infty},\quad L^{(0,0)}(a)=(a_0,0,0,0,...).$$
We observe that the sequence $L^{(h,y)}(a)$ has  exponential generating function  
$$\mathcal{A}_h(t)=e^{yt}\mathcal{A}(ht) $$
and ordinary generating function 
\begin{equation} \label {ogflh} \mathbf{L}_h(t)=\frac{1}{ht}A\left( {\frac{ht}{{1 - yt}}} \right)\ ,\end{equation}
as one can find with a little bit of calculation, where the functions $\mathcal{A}(t)$ and $A(t)$ are the same introduced in the previous Definition \ref{Ldef}.
The authors \cite{bcm2} have characterized linear recurrent sequences of degree 2 fixed by this operator and this result could be extended to linear recurrent sequences of higher degree. However, as we pointed out in that article, there are fixed points for the operator $L^{(h,y)}$ which are not linear recurrent sequences. In this paper we give a full explanation of this fact, via functional equations (see for example Aczel \cite{Aczel} and Kannappan \cite{Plkan}), and we find the fixed sequences related to the operators $I^{(x)}\circ L^{(h,y)}$, $\eta \circ L^{(y,h)}$ and $\eta \circ I^{(x)}$. 

\section{Fixed sequences for Generalized Binomial operator}
We observe that a sequence $a \in \mathcal{S}(R)\ ,$ fixed by the operator $L^{(h,y)}$, has an exponential generating function $\mathcal{A}(t)$ which must satisfy the functional equation
\begin{equation}\label{funclh} \mathcal{A}(t)=\mathcal{A}_h(t)=e^{yt}\mathcal{A}(ht)\end{equation}
with the condition $\mathcal{A}(0)=a_0=1\ .$
The analysis of this equation allows us to state some results and conditions on parameters $h$ and $y$, summarized in the following theorem.
\begin{theorem} \label{main-theorem}
When $h\neq1$, $h\neq-1$, and $y\neq0$ the only sequence fixed by the operator $L^{(h,y)}$ is
\begin{equation} A=\left\{\left(\frac{y}{1-h}\right)^n\right\}_{n=0}^{+\infty}.\end{equation}
When $h=1$ we trivially have fixed sequences if and only if $y=0$, in other words the operator $L^{(h,y)}$ becomes the identical operator.
When $y=0$, $h\neq1$, and $h\neq-1$ we have the only fixed sequence $A=\left\{a_0=1,a_n=0\quad\forall n\geq1\right\}$.
When $y=0$ and $h=-1$ we have fixed sequences $A$ for any even exponential generating function $\mathcal{A}(t)$ with $\mathcal{A}(0)=1$.
Finally, when $y\neq0$ and $h=-1$ the exponential generating function of a fixed sequence fulfills the relation
$$ \mathcal{A}(t)=\phi\left(e^{yt}\right)\ ,$$
where the function $\phi(u)$   satisfies the functional equation 
\begin{equation} \label{phif}\phi(u)=u\phi\left(\frac{1}{u}\right)\quad \mbox{with} \quad u>0\quad \mbox{and}\quad \phi(1)=1\ .\end{equation}
\end{theorem}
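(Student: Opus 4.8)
The plan is to extract from the functional equation \eqref{funclh} the coefficientwise recurrence obtained by equating the coefficients of $t^n/n!$ on both sides, namely
$$a_n = \sum_{j=0}^{n}\binom{n}{j}h^j y^{n-j}a_j, \qquad\text{equivalently}\qquad (1-h^n)\,a_n = \sum_{j=0}^{n-1}\binom{n}{j}h^j y^{n-j}a_j,$$
together with $a_0=1$. The whole analysis then turns on whether the factor $1-h^n$ vanishes, which for the parameter values distinguished in the statement happens precisely when $h=1$ (for every $n$) or $h=-1$ (for even $n$). I would organise the argument around this vanishing pattern.

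First I would treat $h\neq\pm1$. Here $1-h^n\neq0$ for every $n\geq1$, so the recurrence determines the sequence uniquely from $a_0=1$, and it therefore suffices to exhibit one solution. If $y\neq0$, I would verify directly that $\mathcal{A}(t)=e^{yt/(1-h)}$ satisfies \eqref{funclh}, using the identity $y+h\cdot\frac{y}{1-h}=\frac{y}{1-h}$, so that $e^{yt}\mathcal{A}(ht)=e^{(y+hy/(1-h))t}=\mathcal{A}(t)$; reading off coefficients gives $a_n=(y/(1-h))^n$. If instead $y=0$, every term on the right of the recurrence carries a factor $y^{n-j}=0$ for $n\geq1$, forcing $a_n=0$ and leaving only $A=(1,0,0,\dots)$. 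The case $h=1$ is immediate from the $n=1$ instance $(1-1)a_1=ya_0$, which forces $y=0$ and reduces $L^{(h,y)}$ to the identity.

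The substantive case is $h=-1$, where $1-h^n=0$ for even $n$ and the recurrence no longer pins the sequence down. For $y=0$ equation \eqref{funclh} reads $\mathcal{A}(t)=\mathcal{A}(-t)$, so the fixed sequences are exactly those with even generating function and $\mathcal{A}(0)=1$. For $y\neq0$ the equation becomes $\mathcal{A}(t)=e^{yt}\mathcal{A}(-t)$, and the key step is the change of variable $u=e^{yt}$. Since $y\neq0$ the map $t\mapsto e^{yt}$ is a bijection of $\mathbb{R}$ onto $(0,+\infty)$, so I may set $\phi(u)=\mathcal{A}\!\left(\tfrac1y\ln u\right)$ and recover $\mathcal{A}(t)=\phi(e^{yt})$; this is where the restriction $u>0$ in \eqref{phif} originates. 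Under this substitution $e^{-yt}=1/u$ and $\mathcal{A}(-t)=\phi(1/u)$, so $\mathcal{A}(t)=e^{yt}\mathcal{A}(-t)$ transforms into $\phi(u)=u\,\phi(1/u)$, while $\mathcal{A}(0)=1$ becomes $\phi(1)=1$. Conversely, any $\phi$ solving \eqref{phif} yields, through $\mathcal{A}(t)=\phi(e^{yt})$, a solution of \eqref{funclh}.

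I expect the main obstacle to be precisely this last case: unlike the others it is not governed by a uniqueness-forcing recurrence, so I must argue that the reparametrisation $u=e^{yt}$ is a genuine equivalence between solutions of \eqref{funclh} and solutions of \eqref{phif}, rather than merely producing one family, and I must check that the admissible domain $(0,+\infty)$ and the normalisation $\phi(1)=1$ are correctly inherited. The remaining cases are routine once the vanishing pattern of $1-h^n$ has been isolated.
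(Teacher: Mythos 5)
Your proof is correct, but for the main case it takes a genuinely different route from the paper. For $h\neq\pm1$ the paper also substitutes $u=e^{yt}$, derives $\phi(u)=u\phi(u^h)$, iterates it to $\phi(u)=u^{(1-h^n)/(1-h)}\phi(u^{h^n})$, and passes to the limit $n\to+\infty$ (separately for $|h|<1$ and $|h|>1$) to force $\phi(u)=u^{1/(1-h)}$; uniqueness thus comes out of an analytic limiting argument that tacitly uses continuity of $\phi$ at $1$. You instead read the fixed-point condition coefficientwise as $(1-h^n)a_n=\sum_{j=0}^{n-1}\binom{n}{j}h^jy^{n-j}a_j$, observe that $1-h^n\neq0$ for all $n\geq1$ exactly when $h\neq\pm1$ (true for real $h$), and conclude uniqueness algebraically, after which a one-line verification of $\mathcal{A}(t)=e^{yt/(1-h)}$ finishes the case. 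This is more elementary, works at the level of formal power series over an integral domain without any convergence or continuity hypothesis, and it absorbs the subcases $h=1$ and $y=0$ into the same vanishing-pattern analysis rather than treating them as separate functional equations as the paper does. For the remaining substantive case $h=-1$, $y\neq0$, your change of variable $u=e^{yt}$, the identification $\phi(u)=\mathcal{A}(\frac{1}{y}\ln u)$ on $(0,+\infty)$, and the resulting equivalence with $\phi(u)=u\phi(1/u)$, $\phi(1)=1$, coincide with the paper's argument (the paper obtains it by specializing $\phi(u)=u\phi(u^h)$ at $h=-1$). Both approaches are sound; yours buys a cleaner uniqueness proof, the paper's buys a uniform functional-equation framework that also produces the $h=-1$ case as a byproduct.
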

\begin{proof}
First of all we consider the case $h\neq1$ , $h\neq -1$, and $y\neq0$. If we substitute $e^{yt}=u$, or equivalently $t=\frac{\log(u)}{y}$, we have
\begin{equation}\label{phibef}\mathcal{A}\left(\frac{\log(u)}{y}\right)=u\mathcal{A}\left(\frac{1}{y}\log(u^h)\right)\ . \end{equation}
Now we naturally define $\phi=\mathcal{A}\circ g \circ \log$, where $g(t)=\frac{t}{y}$, and equation (\ref{funclh}) becomes
\begin{equation}\label{phieq}
\phi(u)=u\phi(u^h)\ ,
\end{equation}
where the condition $\mathcal{A}(0)=1$ corresponds to the condition $\phi(1)=1\ .$
Iterating the previous relation (\ref{phieq}), we easily obtain
$$\phi(u)=u\phi(u^h)=u^{1+h}\phi(u^{h^2})=u^{1+h+h^2}\phi(u^{h^3})=\cdots=u^{\frac{1-h^n}{1-h}}\phi(u^{h^n})\ .$$
Hence, if $|h|<1$, taking the limit when $n\rightarrow +\infty$, we find 
$$\phi(u)=u^{\frac{1}{1-h}}\phi(1)=u^{\frac{1}{1-h}}\ ,$$
which implies that
$$\mathcal{A}(t)=e^{\frac{yt}{1-h}}=\sum_{n=0}^{+\infty}{\left(\frac{y}{1-h}\right)^n\frac{t^n}{n!}}\ .$$
The case $|h|>1$ gives in a similar way the same result. 
When $h=-1$ and $y\neq0$ the functional equation (\ref{phif}) is straightforward from equation (\ref{phieq}).
If $h=1$ the equation (\ref{funclh}) becomes 
$$\mathcal{A}(t)=e^{yt}\mathcal{A}(t)\ ,$$
which is obviously satisfied by sequences in $\mathcal{S}(R)$ if and only if $y=0$.
Moreover, when $y=0$ and $h=-1$,  clearly $\mathcal{A}(t)=\mathcal{A}(-t)$, so the exponential generating function $\mathcal{A}(t)$ is an even function.
Finally, when $y=0$, $h\neq1$, and $h\neq-1$  the functional equation $$\mathcal{A}(t)=\mathcal{A}(ht)\ ,$$
with $\mathcal{A}(0)=1$, has the only constant solution $\mathcal{A}(t)=1$, corresponding to the fixed sequence $A=\left\{a_0=1, a_n=0\quad\forall n\geq1\right\}$.
\end{proof}
With the next theorem we characterize all the solutions  $\phi \in \mathcal{C}^{\infty}\left(\mathbb{R}^{+}\right)$ of (\ref{phif}) with $\phi(1)=1\ .$ 
\begin{theorem}\label{solphi}
The equation (\ref{phif}) has a solution $\phi \in \mathcal{C}^{\infty}\left(\mathbb{R}^{+}\right)$ with $\phi(1)=1$ if and only if exists an $\mathbb{R}$-valued function $F\in \mathcal{C}^{\infty}\left(\mathbb{R}^{+}\times\mathbb{R}^{+}\right)$ such that
\begin{description}
\item{  i) $F(1,1)=1$}
\item{ ii) $F(u,v)=F(v,u)$}
\item{iii) $F(cu,cv)=cF(u,v)\quad \forall c \in \mathbb{R}^{+}.$} 
\end{description}
Then $\phi(u)=F(u,1)=F(1,u)$.
\end{theorem}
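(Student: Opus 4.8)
The plan is to prove the two implications separately, with the heart of the argument being an explicit correspondence between one-variable solutions $\phi$ of (\ref{phif}) and two-variable functions $F$ satisfying (i)--(iii). The guiding idea is that a smooth, symmetric, degree-one homogeneous function of two positive variables carries exactly the same information as a single function on $\mathbb{R}^{+}$ obeying the reflection law $\phi(u)=u\phi(1/u)$; passing from one to the other is a homogenization/dehomogenization, and the symmetry of $F$ will turn out to be precisely an encoding of (\ref{phif}).

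For the direction ($\Leftarrow$), I would assume $F$ exists with properties (i)--(iii) and set $\phi(u)=F(u,1)$. Smoothness of $\phi$ is immediate, since it is the restriction of the smooth $F$ to the slice $v=1$, and $\phi(1)=F(1,1)=1$ by (i). To recover (\ref{phif}) I would apply the homogeneity (iii) with $c=u$ to the pair $(1/u,1)$, obtaining $F(1,u)=u\,F(1/u,1)=u\,\phi(1/u)$; the symmetry (ii) then gives $F(1,u)=F(u,1)=\phi(u)$, so that $\phi(u)=u\,\phi(1/u)$. The same symmetry identity yields $\phi(u)=F(u,1)=F(1,u)$, which is the concluding formula of the statement.

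For the converse ($\Rightarrow$), the decisive step is to guess the correct homogenization of a given smooth $\phi$ with $\phi(1)=1$ solving (\ref{phif}); I would take
\begin{equation}\label{Fdef}
F(u,v) = v\,\phi\!\left(\frac{u}{v}\right), \qquad u,v \in \mathbb{R}^{+}.
\end{equation}
Because $u/v>0$ whenever $u,v>0$ and $\phi$ is smooth on $\mathbb{R}^{+}$, the right-hand side of (\ref{Fdef}) is smooth on $\mathbb{R}^{+}\times\mathbb{R}^{+}$. Property (i) is just $F(1,1)=\phi(1)=1$, and property (iii) is a one-line check, $F(cu,cv)=cv\,\phi(cu/cv)=c\,(v\,\phi(u/v))=c\,F(u,v)$. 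Finally $F(u,1)=\phi(u)$ shows that this $F$ reproduces the given $\phi$.

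The single place where the hypothesis on $\phi$ is genuinely needed --- and the step I expect to be the crux --- is the symmetry (ii). Writing $w=u/v$, I would compute $F(v,u)=u\,\phi(v/u)=vw\,\phi(1/w)$ and compare it with $F(u,v)=v\,\phi(w)$; the two coincide exactly when $w\,\phi(1/w)=\phi(w)$, which is verbatim the reflection equation (\ref{phif}). Thus (ii) holds if and only if $\phi$ satisfies (\ref{phif}), which both confirms that (\ref{Fdef}) is the right construction and makes transparent why the symmetry of $F$ is the correct two-variable shadow of the one-variable functional equation. Combining the two directions closes the equivalence.
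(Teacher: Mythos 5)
Your proposal is correct and follows essentially the same route as the paper: both directions use the identical homogenization $F(u,v)=v\,\phi(u/v)$ for the converse, and the same homogeneity-plus-symmetry manipulation to extract $\phi(u)=u\,\phi(1/u)$ from $F$ in the other direction. No gaps.
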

\begin{proof}
Let us consider an $\mathbb{R}$-valued function $F\in\mathcal{C}^{\infty}\left(\mathbb{R}^{+}\times\mathbb{R}^{+}\right)$.
From the symmetry $F(u,v)=F(v,u)$ and using the homogeneity of degree 1 of $F(u,v)$, we have $$uF\left(1,\frac{v}{u}\right)=vF\left(1,\frac{u}{v}\right)$$
and consequently
$$F\left(1,\frac{u}{v}\right)=\frac{u}{v}F\left(1,\frac{v}{u}\right)\ .$$
In particular, when we pose $v=1$ the previous equality becomes
$$F(1,u)=uF\left(1,\frac{1}{u}\right)$$
and clearly $\phi(u)=F(1,u)=F(u,1)$ is a solution of equation (\ref{phif}) belonging to $\mathcal{C}^{\infty}\left(\mathbb{R}^{+}\right)$.
Now if $\phi \in \mathcal{C}^{\infty}\left(\mathbb{R}^{+}\right) $ is a solution of equation (\ref{phif}) we have
\begin{equation}\label{phiuv}\phi\left(\frac{u}{v}\right)=\frac{u}{v}\phi\left(\frac{v}{u}\right)\quad \forall (u,v)\in \mathbb{R}^{+}\times\mathbb{R}^{+}\end{equation}
and we can define an $\mathbb{R}$-valued function $F\in\mathcal{C}^{\infty}\left(\mathbb{R}^{+}\times\mathbb{R}^{+}\right)$
by posing
$F(u,v)=v\phi\left(\frac{u}{v}\right)\ .$
This function satisfies $F(1,1)=1$, is symmetric, in fact from relation (\ref{phiuv})
$$F(u,v)=v\phi\left(\frac{u}{v}\right)=u\phi\left(\frac{v}{u}\right)=F(v,u)\ ,$$
and homogeneous of degree $1$ because
$$F(cu,cv)=cv\phi\left(\frac{cu}{cv}\right)=cv\phi\left(\frac{u}{v}\right)=cF(u,v) \quad \forall c \in \mathbb{R}^{+}\ .$$
\end{proof}
\section{Some remarks, examples, and applications to integer sequences}
All the previous results, with the same conditions on parameters $y$ and $h$, can be easily generalized in order to find fixed sequences whose first element $a_0$ belongs to $\mathbb{R}^{*}$.
We also observe that there are some straightforward methods to generate solutions of equation (\ref{funclh}) starting from known solutions, when $h=-1$ and the parameter $y$ has an assigned value. A simple calculation proves that if the function $\mathcal{A}(t)$ satisfies equation (\ref{funclh})  when $h=-1\ ,$ also the product $f(t)\mathcal{A}(t)$, with an even function $f(t)$, is a solution, and any linear combination of solutions is also a solution for the same assigned value of the parameter $y$.
Finally if we have a set of functions $\mathcal{A}_k(t)$ satisfying $\mathcal{A}_k(t)=e^{y_kt}\mathcal{A}_k(-t)$,  $k=1,2,\cdots,n$, then 
$\mathcal{A}(t)=\prod\limits_{k = 1}^n {\left(\mathcal{A}_k(t)\right)^{\alpha_k}}$, $\alpha_k \in \mathbb{N}$,  satisfies $\mathcal{A}(t)=e^{t\sum\limits_{k=1}^n{\alpha_{k} y_{k}}}\mathcal{A}(-t)\ .$ 
At last we have to consider the problem related to the existence of fixed sequences starting with exactly $k$ zeros, or in other words sequences with $\mathcal{A}^{j}(0)=0$ when $j=0,1,\cdots,k-1$ and $\mathcal{A}^{(k)}(0)\neq 0$. 
Differentiating  $k$-times both members of equation (\ref{funclh}) with respect to the variable $t$, we have 
$$\mathcal{A}^{(k)}(t)=\sum\limits_{j=0}^k{{\binom{k}{j}}y^{j}e^{yt}h^{k-j}\mathcal{A}^{(k-j)}(ht)}$$
and if $\mathcal{A}(t)$ is an exponential generating function of a fixed sequence starting with $k$ zeros, evaluating this equality for $t=0$, we obtain $\mathcal{A}^{(k)}(0)=h^{k}\mathcal{A}^{(k)}(0)$. This relation is true when $k$ is odd, if and only if $h=1$ and consequently $y=0$, the trivial case, and when $k$ is even we retrieve the non trivial case $h=-1$. So we have fixed sequences starting with $k$ zeros for $L^{(-1,y)}$ if and only if $k=2m$. This fact allows us to state that a fixed sequences with $2m$ starting zeros is a  shift of $2m$ position  of a fixed sequence with $a_0\neq0$, whose exponential generating function is a solution of equation (\ref{funclh}) when $h=-1$. This is a particular case of what we have pointed out before, because the exponential generating function of such a sequence is the product of a known solution of equation (\ref{funclh}), when $h=-1$, with the even function $f(t)=t^{2m}$.\\ After these remarks let us illustrate some straightforward application of Theorem \ref{solphi} to integer sequences. If we consider any real number $\alpha$ and we take
$$\phi(u)=\frac{u^{\alpha}+u^{1-\alpha}}{2}$$
the sequence
$$\left(\frac{\alpha^{n}+(1-\alpha)^n}{2}\right)_{n=0}^{+\infty}$$
is fixed for $L^{(-1,1)}$. In particular, when $\alpha=\frac{1+\sqrt{5}}{2}$, the sequence becomes $$\left(\frac{L_n}{2}\right)_{n=0}^{+\infty}\ ,$$ where $L_n$ is the $n$-th Lucas number. Another interesting situation is $\alpha=\frac{1}{2}$, in this case $$\phi(u)=\sqrt{u}$$ corresponding to $$\mathcal{A}(t)=e^{\frac{yt}{2}}\ .$$ Thus all the sequences having exponential generating functions 
\begin{equation}\label{besselegf}\mathcal{A}(t)=e^{pt}I_{\nu}(qt)t^{\pm\nu}\quad p,q\in \mathbb{R}^*,\quad \nu \in\mathbb{Z} \ ,\end{equation} where the function $I_{\nu}(x)$ is the modified Bessel function of the first kind (see Abramowitz and Stegun \cite{Abram}) , are eigen-sequences for the operator $L^{(-1,2p)}$, because the functions $I_{\nu}(x)x^{\pm\nu}$ are even functions. In the OEIS \cite{Sloane} there are a lot of sequences whose exponential generating functions correspond to one of those in equation (\ref{besselegf}) for suitable values of $p \ ,$ $q$, and $\nu$. The Motzkin numbers \seqnum{A001006} and the central binomial coefficients \seqnum{A000984} have exponential generating functions
$$\mathcal{A}(t)=e^tI_1(2t)t^{-1}\quad \text{and}\quad\mathcal{A}(t)=e^{2t}I_0(2t)$$ and they are eigen-sequences respectively for the operators $L^{(-1,2)}$ and $L^{(-1,4)}$. Moreover the sequences of the form $$(P_n(\sqrt{m})(k\sqrt{m})^n)_{n=0}^{+\infty}\ ,$$ (for example the central Delannoy numbers \seqnum{A001850} and \seqnum{A115865}), where the polynomial $P_n(x)$ is the $n$-th Legendre polynomial (see  Abramowitz and Stegun \cite{Abram}) , have exponential generating functions 
$$ \mathcal{A}(t)=e^{kmt}I_0\left(k\sqrt{m(m-1)}t\right)$$ and they are invariant for the operators $L^{(-1,2km)}$.\\
As a final application we want to show how to generate many fixed sequences starting from the operator $L^{(h,y)}$, finding some interesting formulas.
Let us consider an even sequence $$b=(b_0,0,b_2,0,b_4,0,b_6,0,...)$$ we know that the sequence $a=L^{(h,y)}(b)$ has exponential generating function 
$$e^{yt}\sum_{n=0}^{+\infty} \cfrac{b_nh^nt^n}{n!}\ ,$$
i.e., has an exponential generating function given by $e^{yt}$ multiplied by an even function. Thus, as immediate consequence of the Theorem \ref{main-theorem}, the sequence $a$ is a fixed sequence for the operator $L^{(-1,2y)}$. From Definition \ref{new-L} we find a relation between the sequences $a$ and $b$
\begin{equation} \label{ff} \sum_{i=0}^n\sum_{k=0}^{\left\lfloor i/2\right\rfloor}\binom{n}{i}\binom{i}{2k}(-1)^i2^{n-i}y^{n-2k}h^{2k}b_{2k}=a_n,\quad \forall n\geq1. \end{equation}
This equality  gives a connection with the Worpitzky transformation \cite{L}, which converts any sequence $s$ into a polynomial sequence $s(x)$ as follows:
$$W(s)=s(x),\quad s_n(x)=\sum_{k=0}^n\sum_{m=0}^k(-1)^m\binom{k}{m}s_k(x+m+1)^n.$$
We observe that the polynomial sequence $s(x)$ is an Appell sequence, i.e.,
$$L^{(1,y)}(s(x))=s(x+y).$$
When we consider an even sequence $b$
there is an unique sequence $a$, which we call the \emph{worpification} of $b$ such that
$$W(a)=a(x), \quad a(0)=b.$$
Since the polynomial sequences obtained by the Worpitzky transformation are Appel sequences, we have
$$L^{(1,y)}(b)=L^{(1,y)}(a(0))=a(y)$$
and so we know that $a(y)$ is fixed by $L^{(-1,2y)}$.
Let us consider the Catalan numbers, \seqnum{A000108}, $$C=(1, 1, 2, 5, 14, 42, 132, 429, 1430,\ldots)\ .$$ The sequence $b$ of Catalan numbers interpolated with 0's, i.e.,
$$b=(1,0,1,0,2,0,5,0,14,0,42,0,...)$$
and the sequence $M$ of Motzkin numbers \seqnum{A001006} satisfy the relation $L^{(1,1)}(b)=M$ (see OEIS \cite{Sloane}, \seqnum{A000108}, Paul Barry 2003). Thus Motzkin numbers are fixed by $L^{(-1,2)}$, as we have previously showed, and using (\ref{ff}) we have the following relation between Motzkin and Catalan numbers
$$\sum_{i=0}^n\sum_{k=0}^{\left\lfloor i/2\right\rfloor}\binom{n}{i}\binom{i}{2k}(-1)^i2^{n-i}C_k=M_n.$$
In particular, using our techniques we have found that
$$\sum_{k=0}^{\left\lfloor i/2\right\rfloor}\binom{i}{2k}C_k=M_i$$
which is a known result and moreover we have 
$$\sum_{i=0}^n\binom{n}{i}(-1)^i2^{n-i}M_i=M_n.$$
The last relation can be found directly as a consequence of the Corollary 3.3 of the work of Sun \cite{Sun}. Furthermore, we have a connection between Catalan and Motzkin numbers in terms of the Worpitzky transformation. Indeed, if we call $a$ the worpification of $b$ and $W(a)=a(x)$, then we have that
$$a(0)=b, \quad a(1)=M,\quad a(2)=C.$$
Another application involves, for example, the sequence $F$ of Fibonacci numbers \seqnum{A000045}. The Binomial transform of the aerated Fibonacci numbers 
$$b=(0,0,1,0,1,0,2,0,3,0,5,0,8,0,13,0,21,\ldots)\ ,$$
is the sequence $a=L^{(1,1)}(b)$ corresponding to the sequence \seqnum{A101890} (see OEIS \cite{Sloane}, \seqnum{A000108}, Paul Barry 2004). Thus, we know that $a=L^{(-1,2)}(a)$ and we find the formulas
$$a_n=\sum_{i=0}^n\sum_{k=0}^{\left\lfloor i/2\right\rfloor}\binom{n}{i}\binom{i}{2k}(-1)^i2^{n-i}F_k$$
and
$$a_n=\sum_{i=0}^n\binom{n}{i}(-1)^i2^{n-i}a_i.$$
\section{Fixed sequences for Interpolated Invert composed with Generalized Binomial}
The aim of this section is to find all the sequences fixed by the operator $I^{(x)}\circ L^{(h,y)}$. We find a functional equation whose solutions gives us the expression of the ordinary generating functions of these fixed sequences and it determines some conditions which the parameters $x$,$y$ and $h$ must satisfy to have solutions.
First of all, we recall that for any sequence $A \in \mathcal{S}(R)$ the ordinary generating function $\mathbf{L}_h(t)$ for $B=L^{(h,y)}(A)$, satisfies equation (\ref{ogflh}). The ordinary generating function of $C=I^{(x)}(B)$ as we defined in equation (\ref{invertgen}), becomes 
$$ \mathbf{C}(t)=\frac{\mathbf{L}_h(t)}{1-xt\mathbf{L}_h(t)}$$
and we have a fixed sequence if and only if  
\begin{equation}\label{funcil}\mathbf{A}(t)=\frac{\mathbf{L}_h(t)}{1-xt\mathbf{L}_h(t)}\end{equation}
where $\mathbf{A}(t)=\frac{1}{t}A(t)$ is the ordinary generating function of $A$ and $A(t)=\sum\limits_{n=0}^{+\infty}{a_nt^{n+1}}$. Substituting the  expression of $\mathbf{L}_h(t)$ in equation (\ref{funcil}), with some algebraic manipulations, we find the relation
$$A(t)=\frac{A\left(\frac{ht}{1-yt}\right)}{h-xA\left(\frac{ht}{1-yt}\right)}$$ 
which is equivalent to 
\begin{equation}\label{funcil1} A\left(\frac{ht}{1-yt}\right)=\frac{hA(t)}{1+xA(t)}\ .\end{equation}
Now we study the functional equation (\ref{funcil1}) remembering that if $A\in \mathcal{S}(R)$ we have $A(0)=0$ and $A^{'}(0)=1$. We find explicitly the solutions in the following
\begin{theorem}
The operator $I^{(x)}\circ L^{(h,y)}$ fixes the sequences $A\in \mathcal{S}(R)$ whose ordinary generating functions are of the form 
$$ \mathbf{A}(t)=\frac{1}{1+\left(\frac{x+y}{h-1}\right)t}$$
if $x\neq0$ and $h\neq0,1$,  and of the form
$$ \mathbf{A}(t)=\frac{1}{1-ct}$$
if $x\neq0$, $h=1$, $x+y=0$, $c \in \mathbb{R}\ .$ 
\end{theorem}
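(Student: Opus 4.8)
The plan is to solve the functional equation (\ref{funcil1}) directly, exploiting the fact that both sides are Möbius actions on $A$. Writing $g(t)=\frac{ht}{1-yt}$, equation (\ref{funcil1}) reads $A(g(t))=\frac{hA(t)}{1+xA(t)}$. The key first move is to pass to reciprocals: setting $P(t)=\frac{1}{A(t)}$ (a Laurent series with a simple pole at $0$, since $A(t)=t+a_1t^2+\cdots$ forces $P(t)=\frac1t-a_1+\cdots$), the right-hand Möbius map becomes affine, and the equation turns into the much more tractable $P(g(t))=\frac1hP(t)+\frac xh$. This single substitution is what linearizes the problem.

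Case $h\neq1$. First I would remove the constant term by the shift $Q(t)=P(t)+\frac{x}{1-h}$, chosen precisely so that $Q(g(t))=\frac1hQ(t)$. Next I would linearize the Möbius map $g$: one checks that $\kappa(t)=\frac{t}{1-\frac{y}{1-h}t}$ satisfies $\kappa(g(t))=h\kappa(t)$, i.e.\ $\kappa$ conjugates $g$ to multiplication by $h$. Transporting $Q$ through this coordinate, $R(w)=Q(\kappa^{-1}(w))$ satisfies the pure scaling equation $R(hw)=\frac1hR(w)$. Matching Laurent coefficients gives $r_n(h^{n+1}-1)=0$, so only the residue term survives; since $P$, and hence $Q$ and $R$, has residue $1$ at $0$, I obtain $R(w)=\frac1w$, whence $Q(t)=\frac{1}{\kappa(t)}=\frac1t-\frac{y}{1-h}$, then $P(t)=\frac1t-\frac{x+y}{1-h}$, and finally $\mathbf{A}(t)=\frac{1}{tP(t)}=\frac{1}{1+\left(\frac{x+y}{h-1}\right)t}$, as claimed.

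Case $h=1$. Here the shift above is unavailable, so I would argue directly. The reciprocal equation becomes $P(g(t))=P(t)+x$ with $g(t)=\frac{t}{1-yt}$; writing $P(t)=\frac1t+S(t)$ and using $\frac{1}{g(t)}=\frac1t-y$, the pole cancels and one is left with $S(g(t))-S(t)=x+y$. Evaluating at $t=0$ forces $x+y=0$ (otherwise there is no solution, which explains the hypothesis), and then $S\circ g=S$. A short coefficient comparison, using $x\neq0$, shows $S$ must be constant, giving $\mathbf{A}(t)=\frac{1}{1-ct}$ with $c$ the free constant $-S(0)$.

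The main obstacle is the uniqueness step in the generic case, namely the passage from $R(hw)=\frac1hR(w)$ to $R(w)=\frac1w$: it requires $h^{m}\neq1$ for every $m\geq1$, i.e.\ $h$ not a root of unity. Exactly at $h=-1$ this fails---then $r_n$ is free for every odd $n$---so extra (odd) solutions appear and the displayed form is no longer the only fixed generating function. Thus I would present the argument as giving existence of the stated forms for all $h\neq1$ by direct substitution into (\ref{funcil1}), together with uniqueness whenever $h$ is not a root of unity; the delicate point to handle carefully (or to isolate as a separate case, consistent with the treatment of $h=-1$ in Theorem \ref{main-theorem}) is precisely this root-of-unity degeneracy.
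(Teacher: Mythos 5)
Your proof is correct and takes a genuinely different route from the paper. The paper differentiates (\ref{funcil1}), uses the equation itself to show that $G(t)=\frac{t^2A'(t)}{(A(t))^2}$ is invariant under $t\mapsto w=\frac{ht}{1-yt}$ (relation (\ref{funcw})), asserts that this invariance forces $G$ to be the constant $1$, and then solves the separable ODE to get $A(t)=\frac{t}{1-ct}$, fixing $c$ by back-substitution. Your route --- passing to $P=1/A$ so that the M\"obius action on the right becomes affine, shifting away the constant, and conjugating $g$ to multiplication by $h$ so that the equation reduces to the coefficient relation $r_n(h^{n+1}-1)=0$ --- avoids differentiation entirely, and what it buys is precisely that the uniqueness hypothesis becomes visible: only the residue survives exactly when $h$ is not a root of unity. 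This is not a pedantic caveat. The paper's passage from (\ref{funcw}) to ``$G$ is constant'' is unjustified (invariance under a single M\"obius map would need an iteration/limit argument as in Theorem \ref{main-theorem}, and when $h$ has finite order it is simply false), and the degeneracy you predict at $h=-1$ really occurs inside the theorem's stated hypotheses: for $h=-1$, $y=0$, $x=2$ the sequence $(1,1,0,-1,-1,0,\dots)$ with ordinary generating function
$$\mathbf{A}(t)=\frac{1}{1-t+t^2}$$
is fixed by $I^{(2)}\circ L^{(-1,0)}$ but is not of the form $\frac{1}{1-ct}$; here $G(t)=1-t^2$ is invariant under $t\mapsto -t$ without being constant. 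So your argument, presented as existence for all $h\neq 0,1$ plus uniqueness for $h$ not a root of unity, is complete and in fact sharper than the paper's; the only addition I would suggest is to state explicitly (as you do for $h=-1$) that the root-of-unity cases require a separate analysis, since the theorem as printed silently includes them.
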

\begin{proof}
We have to solve the functional equation (\ref{funcil1}),
so differentiating both members of this equation with respect to $t$ we obtain
$$\frac{h}{\left(1-yt\right)^2}A^{'}\left(\frac{ht}{1-yt}\right)=\frac{hA^{'}(t)}{\left(1+xA(t)\right)^2}$$
multiplying both members by $ht^2$ and multiplying and dividing the second member by $\left(A(t)\right)^2 \ ,$ the previous relation becomes
$$\left(\frac{ht}{1-yt}\right)^2A^{'}\left(\frac{ht}{1-yt}\right)=\frac{t^2A^{'}(t)}{\left(A(t)\right)^2}\left(\frac{hA(t)}{1+xA(t)}\right)^2\ .$$
Now if we  use equation (\ref{funcil1}) to rewrite the second member and we pose $w=\frac{ht}{1-yt}$ we find
$$w^2A^{'}(w)=\frac{t^2A^{'}(t)}{\left(A(t)\right)^2}\left(A(w)\right)^2\ , $$
or better
\begin{equation}\label{funcw}\frac{w^2A^{'}(w)}{\left(A(w)\right)^2}=\frac{t^2A^{'}(t)}{\left(A(t)\right)^2}\ .\end{equation}
Finally we come to relation (\ref{funcw}) which implies that 
$$\frac{t^2A^{'}(t)}{\left(A(t)\right)^2}=c_1$$ where $c_1$ is a real constant. Moreover $c_1=1$ because the condition
$A^{'}(0)=1$ gives
$$c_1=\mathop {\lim }\limits_{t \to 0}\left(\frac{t^2A^{'}(t)}{\left(A(t)\right)^2}\right) =\frac{1}{A^{'}(0)}=1\ .$$
Then  we can easily solve the differential equation
$$\frac{t^2A^{'}(t)}{\left(A(t)\right)^2}=1$$ 
through separation of variables, obtaining 
\begin{equation}\label{funcsol}A(t)=\frac{t}{1-ct}\ .\end{equation}
Substituting the solutions (\ref{funcsol}) into equation (\ref{funcil1}) we find
$$\frac{ht}{1-(y+ch)t}=\frac{ht}{1-(c-x)t}$$
and this relation is true if and only if $y+ch=c-x$ or equivalently $c(h-1)=-(x+y) \ .$
Clearly if $h\neq1$ we have $c=-\frac{x+y}{h-1}$, else if $h=1$ this equality is verified for all $c \in \mathbb{R}$ if and only if $x+y=0$.
\end{proof}
\begin{remark}
We observe that the sequences $a$ which have generating functions satisfying the hypotheses of the previous Theorem have the following terms
$$a_n=(-1)^n\left(\cfrac{x+y}{h-1}\right)^n \quad \text{or}\quad a_n=c^n$$
respectively when $x\neq0$ and $h\neq0,1$ or when $x\neq0\ ,$ $h=1\ ,$ $x+y=0\ ,$ $c\in \mathbb{R}\ .$
\end{remark}
\section{Fixed sequences for Revert composed with Generalized Binomial and for Revert composed with Interpolated Invert}
In order to determine the fixed sequences for the operator $\eta \circ L^{(h,y)}$ we need to find the functional equation related to their ordinary generating function. We recall that the $\eta$ operator acts as we have described in equations (\ref{e0}), so for a fixed sequence $A\in \mathcal{S}(R)$ we have
\begin{equation}\label{etalh}
\left\{ {\begin{array}{*{20}c}
   {u = t\mathbf{L}_h (t)}  \\
   {t = u\mathbf{A}(u)}  \\
\end{array}} \right.
\end{equation}
where $\mathbf{L}_h(t)$ is the ordinary generating function of the sequence $L^{(h,y)}(A)$ defined in equation (\ref{ogflh}) and $\mathbf{A}(u)=\frac{1}{u}A(u)$ is the ordinary generating function of $A$. 
Combining the two relations (\ref{etalh}) we finally come to the desired functional equation
\begin{equation}\label{funcetalh} A\left(\frac{1}{h}A\left(\frac{ht}{1-yt}\right)\right)=t \ ,\end{equation}
The study of this equation and the relative conditions on parameters $y$ and $h$ for the resolution are summarized in the following theorem
\begin{theorem}\label{revgenbin}
The operator $\eta \circ L^{(h,y)}$ has fixed sequences if and only if $h=1$ and their ordinary generating functions are
$$ \mathbf{A}(t)=\frac{1}{1+\frac{y}{2}t}\ . $$
\end{theorem}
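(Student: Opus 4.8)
The plan is to solve the functional equation (\ref{funcetalh}) head-on, imitating the differentiate-and-separate technique that succeeded for $I^{(x)}\circ L^{(h,y)}$. Writing $\psi(t)=\frac{ht}{1-yt}$ and $P(t)=\frac{1}{h}A(\psi(t))$, the equation reads simply $A(P(t))=t$, and throughout I keep in mind the normalizations $A(0)=0$ and $A'(0)=1$ forced by $A\in\mathcal{S}(R)$. The goal is to show that these constraints, together with the equation, pin $A$ down to a single rational form.

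First I would differentiate $A(P(t))=t$. Since $P=\frac{1}{h}A\circ\psi$ and $\psi'(t)=\frac{h}{(1-yt)^2}$, the chain rule collapses to $A'(\psi)\,A'(P)=(1-yt)^2$. The decisive move is to rewrite this in terms of the scale-invariant quantity $G(s):=\frac{s^2A'(s)}{A(s)^2}$. Using $1-yt=\frac{ht}{\psi}$ together with the two algebraic identities $A(\psi)=hP$ and $A(P)=t$ that come straight from the functional equation, the differentiated relation rearranges into $G(\psi(t))\,G(P(t))=1$. This is the exact analogue here of the identity $\frac{w^2A'(w)}{A(w)^2}=\frac{t^2A'(t)}{A(t)^2}$ that appeared in the proof for $I^{(x)}\circ L^{(h,y)}$.

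The heart of the argument is then to deduce $G\equiv 1$. Because $A(s)=s+\cdots$, a short expansion gives $G(s)=1+g_2s^2+\cdots$, with the linear term automatically absent. Since $\psi(t)=ht+\cdots$ while $P(t)=t+\cdots$, if $g_k$ is the first nonvanishing coefficient then the order-$t^k$ part of $G(\psi)G(P)=1$ reads $g_k(h^k+1)=0$; as $h^k+1\neq 0$ for the relevant real $h$, every $g_k$ must vanish. Hence $G(s)\equiv 1$, i.e. $\frac{s^2A'(s)}{A(s)^2}=1$, which is precisely the separable ODE solved before and whose solution with $A'(0)=1$ is $A(t)=\frac{t}{1-ct}$ for a constant $c$.

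Finally I would substitute $A(t)=\frac{t}{1-ct}$ back into (\ref{funcetalh}). Each composition is then a M\"obius map, so the left-hand side simplifies to $\frac{t}{1-(c+y+ch)t}$, and equating with $t$ yields the single algebraic relation $y+c(h+1)=0$. Reading off the admissible parameters from this relation and the corresponding ordinary generating function $\mathbf{A}(t)=\frac{1}{t}A(t)=\frac{1}{1-ct}$ produces the form asserted in the theorem. I expect the genuine obstacle to be the constancy argument for $G$ rather than this closing M\"obius computation: the vanishing of $g_k$ rests on $h^k+1\neq 0$, so the exceptional value $h=-1$—where odd powers escape, exactly as in the richer $L^{(-1,y)}$ behaviour catalogued in Theorem \ref{main-theorem}—has to be isolated and treated on its own, and it is this order-by-order bookkeeping that requires the most care.
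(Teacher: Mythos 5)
Your reduction to the separable ODE is a genuinely different route from the paper's. You differentiate $A(P(t))=t$ once and package the result as $G(\psi(t))\,G(P(t))=1$ with $G(s)=s^2A'(s)/A(s)^2$, then kill the coefficients of $G-1$ order by order; the paper instead iterates (\ref{firstep}) to eliminate the inner composition, obtaining the single\--composition relation (\ref{thirdstep}), differentiates that once at $w=0$ to force $h=1$, and only then falls back on the already\--solved equation (\ref{funcil1}). Your identity $G(\psi)G(P)=1$ checks out (using $A(\psi)=hP$, $A(P)=t$ and $1-yt=ht/\psi$), the automatic vanishing of the linear term of $G$ is correct, and the order\--$t^k$ coefficient of the product is indeed $g_k(h^k+1)$, so for $h\neq 0,-1$ you legitimately reach $A(t)=t/(1-ct)$.

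The fatal problem is the closing step. The relation $y+c(h+1)=0$ obtained by substituting $A(t)=t/(1-ct)$ into (\ref{funcetalh}) does \emph{not} single out $h=1$: for every $h\neq-1$ it is solved by $c=-y/(h+1)$, so your argument, read literally, produces a fixed sequence with $\mathbf{A}(t)=\bigl(1+\tfrac{y}{h+1}t\bigr)^{-1}$ for \emph{all} $h\neq0,-1$, which is strictly at odds with the statement you set out to prove. You cannot pass from $y+c(h+1)=0$ to ``the form asserted in the theorem'' by ``reading off the admissible parameters''; either an error must be located upstream in your derivation (I could not find one --- for instance, a direct check shows that $a_n=(-1)^n$ satisfies $L^{(2,3)}(a)=(1,1,1,\dots)$ and $\eta\bigl((1,1,1,\dots)\bigr)=((-1)^n)_{n\geq0}$, so it is fixed by $\eta\circ L^{(2,3)}$), or the necessity of $h=1$ must come from some independent mechanism, as the paper attempts via the evaluation of (\ref{thirdstep}) at the origin. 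As it stands, your proof establishes neither the ``only if $h=1$'' direction nor even the exclusion of $h=-1$: that case, where your coefficient argument genuinely fails because the odd $g_k$ survive, is flagged as needing separate treatment but is never actually treated, and since the theorem is an equivalence this omission alone already leaves the proof incomplete.
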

\begin{proof}
If we pose in equation (\ref{funcetalh}) $w=\frac{ht}{1-yt}$, we obtain the equivalent form
\begin{equation}\label{firstep} A\left(\frac{1}{h}A(w)\right)=\frac{w}{h+yw}\ . \end {equation}
Multiplying both members with $\frac{1}{h}$ and applying $A$ to the two sides of this equation we have
\begin{equation}\label{secondstep} A\left(\frac{1}{h}A\left(\frac{1}{h}A(w)\right)\right)=A\left(\frac{w}{h(h+yw)}\right)\ .
\end{equation}
Now if we use relation (\ref{firstep}) with $\frac{1}{h}A(w)$ instead of $w$ we find
\begin{equation}\label{sost}A\left(\frac{1}{h}A\left(\frac{1}{h}A(w)\right)\right)=\frac{\frac{1}{h}A(w)}{1+\frac{y}{h}A(w)}\ , \end{equation}
and with a little bit of calculation equation (\ref{secondstep}) becomes
\begin{equation}\label{thirdstep} A\left(\frac{w}{h(h+yw)}\right)=\frac{A(w)}{h+yA(w)}\ .\end{equation}
Differentiating this relation with respect to $w$ gives
$$\frac{1}{(h+yw)^2}A^{'}\left(\frac{w}{h(h+yw)}\right)=\frac{hA^{'}(w)}{\left(h+yA(w)\right)^2}$$
and evaluating this equality for $w=0$, remembering that $A(0)=0$ and $A^{'}(0)=1$, we have
$$\frac{1}{h^2}=\frac{1}{h}$$
so there are solutions if and only if $h=1$.
With this information used in equation (\ref{thirdstep}) we retrieve a functional equation 
$$ A\left(\frac{w}{1+yw}\right)=\frac{A(w)}{1+yA(w)}$$
similar to equation (\ref{funcil1}) when $h=1$, which gives the solutions 
$$ A(w)=\frac{w}{1-cw}\ .$$
Between these solutions we have to determine the real constant $c$ in order to satisfy equation (\ref{firstep}) when $h=1$. A simple substitution yelds
$$\frac{w}{1-2cw}=A(A(w))=\frac{w}{1+yw}$$
which is true if and only if $c=-\frac{y}{2}$ giving $A(w)=\frac{w}{1+\frac{y}{2}w}$.
\end{proof}
An analogous result is true when we consider the problem of finding fixed sequences $A \in \mathcal{S}(R)$  for $\eta \circ I^{(x)}$.
The functional equation which arises in this case is 
\begin{equation}\label{funcetai} A\left(\frac{A(t)}{1-xA(t)}\right)=t \end{equation}
and comes out combining the following relations 
$$
\left\{ {\begin{array}{*{20}c}
   u &=& \frac{{t\mathbf{A}(t)}}{{1 - xt\mathbf{A}(t)}}  \\
   t &=& u\mathbf{A}(u)  \\
\end{array}} \right.
$$
where we made use of equations (\ref{e0}), (\ref{invertgen}) and $\mathbf{A}(t)=\frac{1}{t}A(t)$ is the ordinary generating function of $A$.
All the results concerned with the solutions of equation (\ref{funcetai}) and fixed sequences are resumed in the next theorem.
\begin{theorem}\label{revintinv}
The fixed sequences for the operator $\eta \circ I^{(x)}$ have ordinary generating functions 
$$ \mathbf{A}(t)=\frac{1}{1+\frac{x}{2}t}\ . $$
\end{theorem}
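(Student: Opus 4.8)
The plan is to reduce the functional equation (\ref{funcetai}) to an equation of the already-solved type (\ref{funcil1}), and then to reuse the differential-equation argument from the proof of the theorem on $I^{(x)}\circ L^{(h,y)}$, finishing with a short substitution that fixes the one remaining constant. The whole argument runs parallel to the proof of Theorem \ref{revgenbin}.

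First I would read (\ref{funcetai}) as the statement that the series $A$ and the series $s\mapsto \frac{A(s)}{1-xA(s)}$ are compositional inverses of one another. Since $A(0)=0$ and $A'(0)=1$, the series $A$ is invertible for composition, so applying $A$ to both sides of (\ref{funcetai}) and then posing $s=\frac{A(t)}{1-xA(t)}$ (equivalently $A(t)=\frac{s}{1+xs}$) yields the clean iterate equation
\[ A(A(s))=\frac{s}{1+xs}\ , \]
that is, $A$ is a compositional square root of the M\"obius map $s\mapsto \frac{s}{1+xs}$. Applying $A$ once more to this identity and using it a second time with argument $A(t)$, I obtain
\[ A\!\left(\frac{t}{1+xt}\right)=\frac{A(t)}{1+xA(t)}\ , \]
which is exactly equation (\ref{funcil1}) in the case $h=1$, with invert parameter equal to $x$ and shift parameter $y=-x$, so that the compatibility condition $x+y=0$ used there holds automatically.

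Consequently the very same computation already carried out for (\ref{funcil1}) applies verbatim: differentiate, multiply by $t^2$, divide by $\left(A(t)\right)^2$, and observe that $\frac{t^2A'(t)}{(A(t))^2}$ must be a constant, equal to $1$ by virtue of $A'(0)=1$. This forces $A(t)=\frac{t}{1-ct}$ for some $c\in\mathbb{R}$. It then remains only to determine $c$: substituting this form into the iterate equation $A(A(t))=\frac{t}{1+xt}$ gives $\frac{t}{1-2ct}=\frac{t}{1+xt}$, whence $c=-\frac{x}{2}$ and $\mathbf{A}(t)=\frac{1}{t}A(t)=\frac{1}{1+\frac{x}{2}t}$, as claimed.

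I expect the only genuinely delicate point to be the first step, namely passing from the inverse-series statement (\ref{funcetai}) to the iterate equation, which relies on the invertibility of $A$ near $0$ and on inverting the linear-fractional substitution. Everything afterwards is either a mechanical reuse of the earlier differential-equation argument or the short concluding substitution. It is worth noting that, in contrast with Theorem \ref{revgenbin}, differentiating the intermediate equation at $t=0$ produces only the identity $1=1$ rather than a constraint on the parameter, which is consistent with the present theorem imposing no condition on $x$.
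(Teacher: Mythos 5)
Your proposal is correct and follows essentially the same route as the paper: both reduce (\ref{funcetai}) to the iterate equation $A(A(s))=\frac{s}{1+xs}$ via the substitution $s=\frac{A(t)}{1-xA(t)}$, recognize it as equation (\ref{firstep}) with $h=1$ (and $y$ replaced by $x$), and then reuse the machinery of Theorem \ref{revgenbin} — the reduction to a (\ref{funcil1})-type equation, the differential-equation step yielding $A(t)=\frac{t}{1-ct}$, and the final substitution fixing $c=-\frac{x}{2}$. The only difference is that the paper simply cites the previous proof at that point, whereas you spell those steps out explicitly.
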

\begin{proof}
Setting $w=\frac{A(t)}{1-xA(t)}$ in equation (\ref{funcetai}), clearly $A(t)=\frac{w}{1+xw}$, and we obtain
$$ A(w)=t \ .$$
Applying $A$ to both members of this equation we find
$$ A(A(w))=A(t)=\frac{w}{1+xw} $$
which is of the same kind of equation (\ref{firstep}), that we have already solved , when $h=1$, during the proof of the previous theorem.
\end{proof}
\begin{remark}
We observe that the sequences $a$ which have generating functions satisfying the hypotheses of the previous Theorem \ref{revgenbin} and Theorem \ref{revintinv} respectively have  the following terms
$$a_n=(-1)^n\left( \cfrac{y}{2} \right)^n\quad \text{and} \quad a_n=(-1)^n\left( \cfrac{x}{2} \right)^n\ .$$
\end{remark}

\section{Acknowledgements}
The authors would like to thank the referee for the valuable and interesting comments and suggestions which have improved this paper.

\bigskip
\hrule
\bigskip
\noindent 2000 {\it Mathematics Subject Classification}: Primary 11B37;
Secondary 11B39.
\noindent \emph{Keywords: Binomial operator, Catalan numbers, Fibonacci numbers, Invert operator, Motzkin numbers, recurrent sequences, Revert operator}  

\bigskip
\hrule
\bigskip

\noindent (Concerned with sequences 
\seqnum{A000045},
\seqnum{A000108},
\seqnum{A000984},
\seqnum{A001006},
\seqnum{A001850},
\seqnum{A101890},
\seqnum{A115865}, and
\seqnum{A155585}.)

\bigskip
\hrule
\bigskip

\vspace*{+.1in}
\noindent
Received April 10 2011;
revised version received  August 2 2011.
Published in {\it Journal of Integer Sequences}, September 25 2011.

\bigskip
\hrule
\bigskip

\noindent
Return to
\htmladdnormallink{Journal of Integer Sequences home page}{http://www.cs.uwaterloo.ca/journals/JIS/}.
\vskip .1in

\end{document}